\newfont{\mathb}{msbm10}
\newtheorem{lemma}{Lemma}
\newtheorem{theorem}{Theorem}
\begin{document}
\pagestyle{plain}  
%
\title{Convex decompositions of point sets in the plane}
\date{}
\def\thefootnote{\fnsymbol{footnote}}
\author{Toshinori Sakai\footnotemark[1]
\and Jorge Urrutia\footnotemark[2]}
\footnotetext[1]
{Department of Mathematics and Liberal Arts Education Center, Takanawa Campus, 
Tokai University, Tokyo, Japan, 
{\tt sakai@tokai-u.jp}}
\footnotetext[2]
{Instituto de Matem\'{a}ticas,
Ciudad Universitaria,
Universidad Nacional Aut\'{o}noma de M\'{e}xico, 
M\'{e}xico D.F., M\'{e}xico, 
{\tt urrutia@matem.unam.mx}}

\maketitle

\begin{abstract}
Let $P$ be a set of $n$ points in general position on the plane. 
A set of closed convex polygons with vertices in $P$, 
and with pairwise disjoint interiors is called a convex decomposition of $P$ 
if their union is the convex hull of $P$, and 
no point of $P$ lies in the interior of the polygons. 
We show that there is a convex decomposition of $P$ 
with at most $\frac{4}{3}|I(P)|+\frac{1}{3}|B(P)|+1\le \frac{4}{3}|P|-2$ elements, 
where $B(P)\subseteq P$ is the set of points 
at the vertices of the convex hull of $P$, 
and $I(P)=P-B(P)$. 
\end{abstract}
%
%
\section{Introduction}
Let $n$ be a positive integer and 
$P$ a set of $n$ points in general position on the plane. 
We denote by ${\rm CH}(P)$ the convex hull of $P$. 

A set ${\cal \Pi}$ of closed convex polygons with vertices in $P$, 
and with pairwise disjoint interiors is called a {\em convex decomposition} of $P$ 
if their union is ${\rm CH}(P)$ and 
no point of $P$ lies in the interior of any element of $\Pi$. 
Denote by $u(P)$ the minimum number of polygons such that 
they are the elements of a convex decomposition of $P$. 
Furthermore, define $U(n)$ by the maximum of 
$u(P)$ over all sets $P$ of $n$ points in general position. 

J.\,Urrutia~\cite{1} conjectured $U(n)\le n+1$ for all $n\ge 3$. 
V.\,Neumann-Lara, E.\,Rivera-Campo and Urrutia~\cite{2} studied 
an upper-bound of $U(n)$, and 
proved $U(n)\le \frac{10n-18}{7}$ for all $n\ge 3$. 
This bound was improved by Hosono \cite{3} to 
$\left\lceil \frac{7}{5}(n-3) \right\rceil + 1$ for $n\ge 3$. 
On the other hand, O. Aichholzer and H. Krasser \cite{4} 
studied a lower-bound of $U(n)$, and proved $U(n)\ge n+2$ for $n \ge 13$. 
This bound was improved by J.\,Garc{\'\i}a-L\'opez and M.\,Nicol\'as to 
$U(n)\ge \frac{12}{11}n-2$ for $n \ge 4$. 

In this paper, we study $u(P)$ in terms of $|P|$ and 
the number of vertices of ${\rm CH}(P)$, and give 
a new upper-bound of $u(n)$. 
Let $B(P)$ denote the set of elements of $P$ 
which are the vertices of ${\rm CH}(P)$, and let 
$I(P)=P-B(P)$. 
We prove the following theorem in Section 2: 
\begin{theorem}
$\displaystyle{u(P)\le \frac{4}{3}|I(P)|+\frac{1}{3}|B(P)|+1}$.
\label{th0}
\end{theorem}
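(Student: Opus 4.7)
The plan is to argue by induction on $|P|$, peeling off a small subconfiguration at each step so that the polygons I introduce are at most the resulting decrease in the target. Writing the bound as $\frac{4|I(P)|+|B(P)|+3}{3}$, removing three interior points affords a budget of four new polygons, while removing one interior point affords only $4/3$. The base cases $|I(P)|\le 1$ are verified directly: for $|I(P)|=0$ the convex hull itself is one polygon, and for $|I(P)|=1$ a star-triangulation from the interior point, followed by merging adjacent triangles into convex quadrilaterals wherever possible, easily meets the bound.

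For the inductive step I start with a triangulation $T$ of $P$, which has $2|I(P)|+|B(P)|-2$ triangles by Euler's formula. Each interior edge of $T$ that I delete merges two adjacent faces into one, saving one polygon from the total, so the task reduces to locating enough convex-flippable interior edges to reach the claimed bound. I would look for a favorable local configuration: an interior vertex of triangulation degree three whose star can be convexified by a single flip, an empty convex pentagon with three interior-point vertices and two others, or a cluster of low-degree interior points admitting several independent flips. Applying such a reduction, I remove the relevant interior points from $P$ to obtain $P'$ and invoke induction on $P'$, then glue the resulting decomposition to the polygons produced in the reduction step.

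The main obstacle is the combinatorial-geometric existence claim at the heart of the induction: that every $P$ outside the base cases admits at least one favorable reduction. I would approach this by choosing $T$ canonically --- for instance, the Delaunay triangulation --- so that empty-circle properties and bounded degree statistics force a low-degree interior vertex or a convex-position interior cluster to appear. In pathological cases where such structure is scarce, I expect to invoke a global angle- or degree-counting argument to guarantee that a simpler one- or two-point reduction still fits within the bound. Working out this case distinction in full generality is the principal technical hurdle of the proof.
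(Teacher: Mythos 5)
Your proposal is a plan rather than a proof: everything hinges on the claim that every $P$ outside the base cases admits a ``favorable reduction'' (a degree-three interior vertex with convexifiable star, an empty convex pentagon, a cluster of independent flips), and you explicitly leave that existence claim unproved, calling it ``the principal technical hurdle.'' That hurdle is where all the work lies, and none of the devices you name is known to be always available. Deleting interior edges from a triangulation is essentially the strategy behind the earlier, weaker bounds $\frac{10n-18}{7}$ and $\left\lceil \frac{7}{5}(n-3) \right\rceil + 1$ cited in the introduction; to reach $\frac{4}{3}i+\frac{1}{3}b+1$ from the $2i+b-2$ triangles of a triangulation you would have to delete roughly $\frac{2}{3}(i+b)-3$ interior edges, each of whose removals preserves convexity, and no degree- or empty-circle-counting argument in your sketch comes close to guaranteeing that many independent convex merges.

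Your own accounting also exposes a structural problem with one-point reductions: removing a single interior point buys only $\frac{4}{3}$, but reinserting a point that lies in the interior of a convex face of the smaller decomposition costs at least $2$ extra polygons, since in general position a convex polygon cannot be split through an interior point $q$ into just two convex pieces with vertices in $P$ (both angles at $q$ would have to equal $\pi$). The paper sidesteps this by never peeling interior points in isolation: it removes a convex-hull vertex $p_j$ (possibly together with the second-layer points it exposes), fans $p_j$ onto the newly exposed points of $B(P-\{p_j\})\cap I(P)$, and pays for the fan with the $\frac{1}{3}$-credit gained when interior points are promoted to hull points of the smaller set. The genuinely hard case --- when every hull vertex exposes at most two new points --- is handled by a separate lemma showing that certain angular domains $D_j, D_j'$ are empty of points of $P$, and ultimately by peeling the entire first convex layer and inducting on $I(P)$. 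None of this structure is present in your sketch, so the gap is not a technicality to be worked out later but the substance of the theorem.
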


\noindent
Since $\frac{4}{3}|I(P)|+\frac{1}{3}|B(P)|+1
=\frac{4}{3}n-|B(P)|+1\le \frac{4}{3}n-2$, we also have: 
\begin{theorem}
$\displaystyle{U(n)\le \frac{4}{3}n-2}$.
\label{th1}
\end{theorem}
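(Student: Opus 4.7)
Theorem~\ref{th1} is an immediate corollary of Theorem~\ref{th0}: since $|I(P)|+|B(P)|=n$ and $|B(P)|\ge 3$,
\[
U(n)=\max_{P}u(P)\;\le\;\max_{P}\Bigl(\tfrac{4}{3}n-|B(P)|+1\Bigr)\;\le\;\tfrac{4}{3}n-2,
\]
so the substantive task is really Theorem~\ref{th0}. I would prove the latter by induction on $|I(P)|$.

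For the base case $|I(P)|=0$, the set $P$ is in convex position, $\{\mathrm{CH}(P)\}$ is a one-element convex decomposition, and $1\le \tfrac{1}{3}|B(P)|+1$ since $|B(P)|\ge 3$. The small residual cases $|I(P)|\in\{1,2\}$ I would dispose of by explicit construction, since the inductive step below consumes interior points in groups of three.

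For the inductive step with $|I(P)|\ge 3$, the amortized rate $\tfrac{4}{3}$ strongly suggests processing three interior points at a time using four local convex faces. The plan is to locate three interior points $p_1,p_2,p_3\in I(P)$ together with a convex polygon $Q$ whose vertices lie in $P$, whose open interior meets $P$ in exactly $\{p_1,p_2,p_3\}$, and which can be split into four convex subpolygons using those three points as additional vertices. Applying the inductive hypothesis to $P'=P\setminus\{p_1,p_2,p_3\}$ (which has the same convex hull as $P$) yields a convex decomposition of $\mathrm{CH}(P)$ with at most $\tfrac{4}{3}|I(P)|+\tfrac{1}{3}|B(P)|-3$ polygons. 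If the cage $Q$ is arranged so that it coincides with one of the polygons of this decomposition, replacing $Q$ by its four internal pieces changes the count by $+3$ and gives exactly the claimed bound.

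The main obstacle is the geometric existence lemma at the heart of the inductive step: proving that such a triple $(p_1,p_2,p_3)$ and a compatible cage $Q$ always exist, and that $Q$ can be chosen to coincide with a face of some convex decomposition of $P'$. Natural candidates for the triple are three consecutive interior points on the convex hull of $I(P)$, or three points extremal in a sweep direction, with $Q$ built from the ring of $P'$-points that surround them; verifying simultaneously that $\mathrm{CH}(P)\setminus Q$ admits an inductive decomposition with the required count and that the interior of $Q$ really does split into four convex pieces using the three interior points, is a careful geometric case analysis. Once this existence lemma is in hand, the remaining bookkeeping (including the $|I(P)|\in\{1,2\}$ endpoints) is routine.
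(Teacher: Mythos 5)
Your derivation of Theorem~\ref{th1} from Theorem~\ref{th0} is exactly the paper's proof of this statement: $\frac{4}{3}|I(P)|+\frac{1}{3}|B(P)|+1=\frac{4}{3}n-|B(P)|+1\le\frac{4}{3}n-2$ since $|B(P)|\ge 3$. That part is correct and complete.

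However, you rightly observe that the substantive content is Theorem~\ref{th0}, and your plan for it has a genuine gap which you yourself flag: the existence of a triple of interior points together with a ``cage'' $Q$ that is empty except for those three points, splits into four convex pieces, \emph{and} coincides with a face of a convex decomposition of $P\setminus\{p_1,p_2,p_3\}$ obtained from the inductive hypothesis. The last requirement is the fatal one --- the inductive hypothesis only gives you \emph{some} decomposition of $P'$, and you have no control over which convex polygons appear as its faces, so you cannot arrange for a prescribed cage to be one of them. The paper never attempts this; instead it builds local fans of triangles around the removed points and \emph{merges} some of them with inductively obtained faces along shared edges (two convex polygons sharing an edge merge into one convex polygon when the angles permit), and the empty-region conditions (3), (4) and Lemmas 1--2 exist precisely to guarantee that such a merge is always available. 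Moreover, the paper's induction is on $|P|$ rather than $|I(P)|$, and its cases trade off boundary and interior points at different rates --- Case 1 deletes one hull vertex, Case 2 deletes two hull vertices and promotes $m$ interior points to the hull (each promotion saving $\frac{4}{3}-\frac{1}{3}=1$ in the bound), Case 4.1 deletes two hull vertices and one interior point for four new polygons, and Case 4.2 recurses on $I(P)$ itself at a cost of $b$ extra polygons. The two-coefficient form of the bound is exactly what makes this varied bookkeeping close, whereas your uniform ``three interior points for four faces'' exchange is only one of the rates actually needed. So while your high-level amortization intuition is sound, the existence lemma you defer is where essentially all of the paper's work lives, and in the form you state it (cage equals a face of the inductive decomposition) it is not provable.
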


%
\section{Proof of Theorem~\ref{th0}}~
We prove 
\begin{equation}
u(P)\le \frac{4}{3}|I(P)|+\frac{1}{3}|B(P)|+1
\label{ineq}
\end{equation}
by induction on $|P|$. 
First consider the case where $|P|=3$. 
In this case, $|I(P)|=0, \, |B(P)|=3$ and $u(P)=1$. 
Since 
$\frac{4}{3} \! \cdot \! 0 +\frac{1}{3} \! \cdot \! 3+1=2>1$, 
(\ref{ineq}) holds. 

Next consider the case where $|P|\ge 4$. 
For this $P$, write $|I(P)|=i$, $|B(P)|=b$ and 
label the points of $B(P)$ as 
$p_0, \, p_1, \, \dots , \, p_{b-1}$ along the boundary of 
${\rm CH}(P)$ in the counter-clockwise order, 
where the indices are to be taken modulo $b$. 

Assume first that $i=1$ or $i=2$. 
In this case, 
we can verify (\ref{ineq}) without using the inductive hypothesis. 
Actually, if $i=1$, we have 
$u(P)=3 \le \frac{4}{3} \! \cdot \! 1 +\frac{1}{3}b+1$ (Figure~\ref{cd01}(a)); 
and 
if $i=2$, $u(P)=4\le \frac{4}{3} \! \cdot \! 2 +\frac{1}{3}b+1$ 
(Figure~\ref{cd01}(b)). 
\begin{center}
 \begin{figure}[htbp]
 \begin{minipage}[b]{.48\linewidth}
 \, \, \, 
  \begin{center}\includegraphics[width=.8\linewidth]{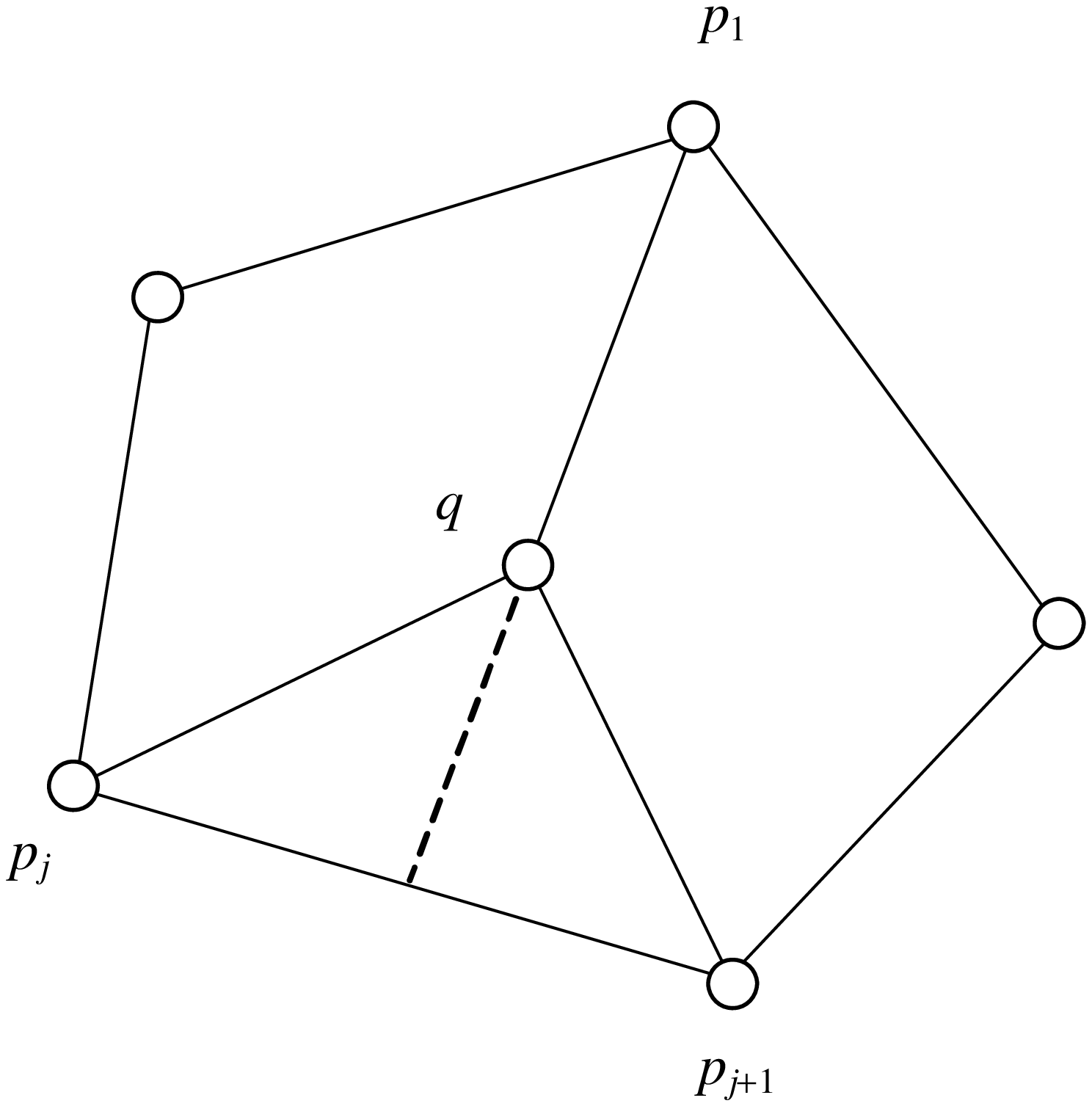}\\
  (a)
  \end{center}
  \end{minipage}
   \begin{minipage}[b]{.48\linewidth}
 \, \, \, 
  \begin{center}\includegraphics[width=.8\linewidth]{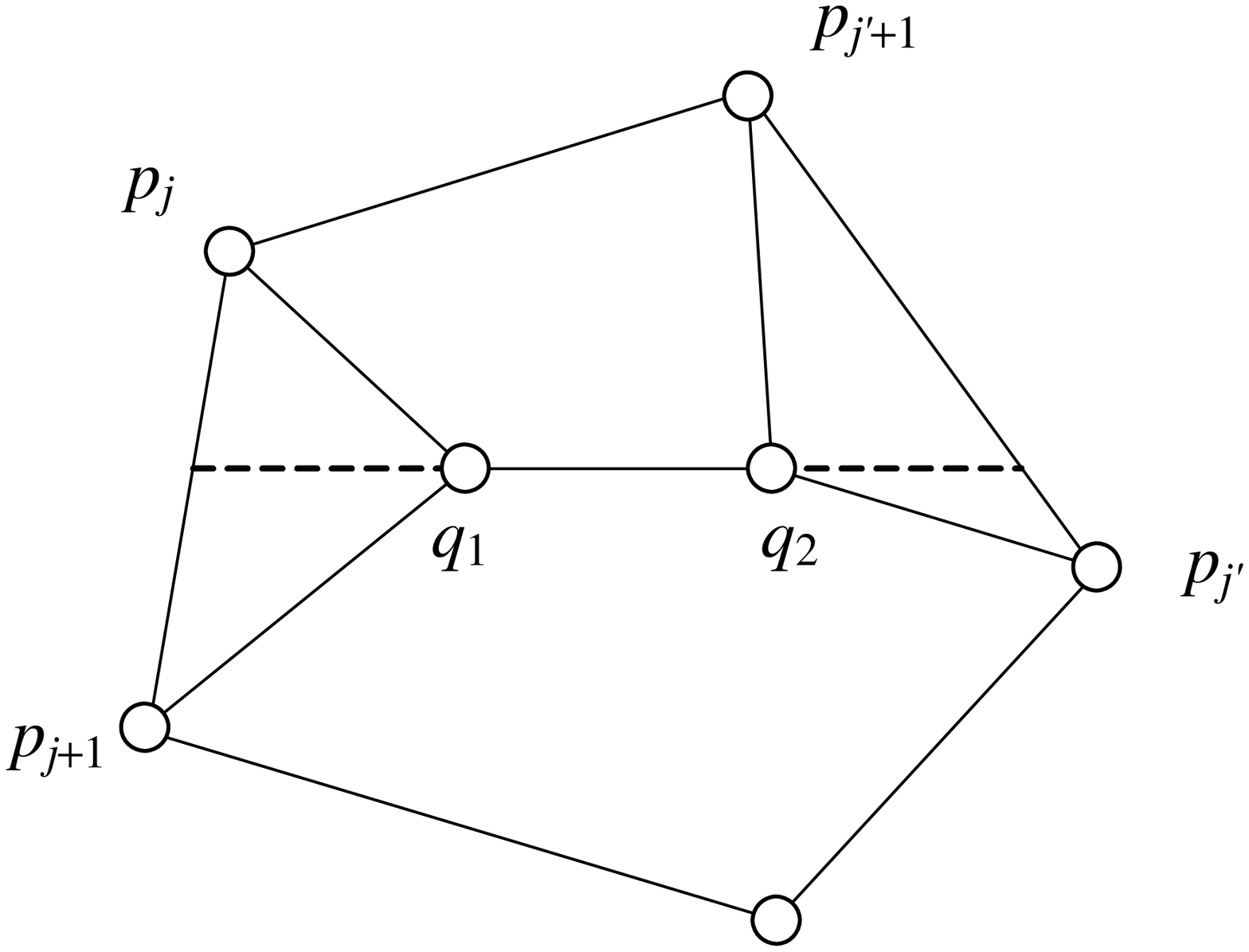}\\
  (b)
  \end{center}
  \end{minipage}
  \caption{The cases where (a) $i=1$ and (b) $i=2$.}
  \label{cd01}
  \end{figure}
\end{center}

Thus assume that
\begin{equation}
i\ge 3.
\label{0}
\end{equation}

For three points $p, q, r$ in general position, 
we denote by $\triangle pqr$ the triangle with vertices $p, q, r$, 
and for a subset $K\subset {\mathbb R}^2$, we denote by ${\rm Int}(K)$
the interior of $K$. 

\medskip
\noindent
{\bf Case 1.}~~${\rm Int}(\triangle p_{j-1}p_jp_{j+1}) \cap P=\emptyset $ 
for some $j$ (Figure~\ref{cd02}):

Take $j$ with ${\rm Int}(\triangle p_{j-1}p_jp_{j+1}) \cap P=\emptyset $. 
By inductive hypothesis, there is a convex decomposition $\Pi _0$ of 
$P-\{p_j\}$ with $u(P-\{p_j\})\le 
\frac{4}{3}i+\frac{1}{3}(b-1)+1<\frac{4}{3}i+\frac{1}{3}b+1$ elements. 
Let $\mathcal P$ be an element of $\Pi _0$ which has the segment 
$p_{j-1}p_{j+1}$ on its boundary. 
Then $\triangle p_{j-1}p_jp_{j+1}$ can be combined with $\mathcal P$ 
to form a single convex polygon. 
Thus 
$u(P)\le u(P-\{p_j\})<\frac{4}{3}i+\frac{1}{3}b+1$, as desired. 
%
\begin{center}
 \begin{figure}[htbp]
 \begin{minipage}[b]{.48\linewidth}
 \, \, \, 
  \begin{center}\includegraphics[width=.8\linewidth]{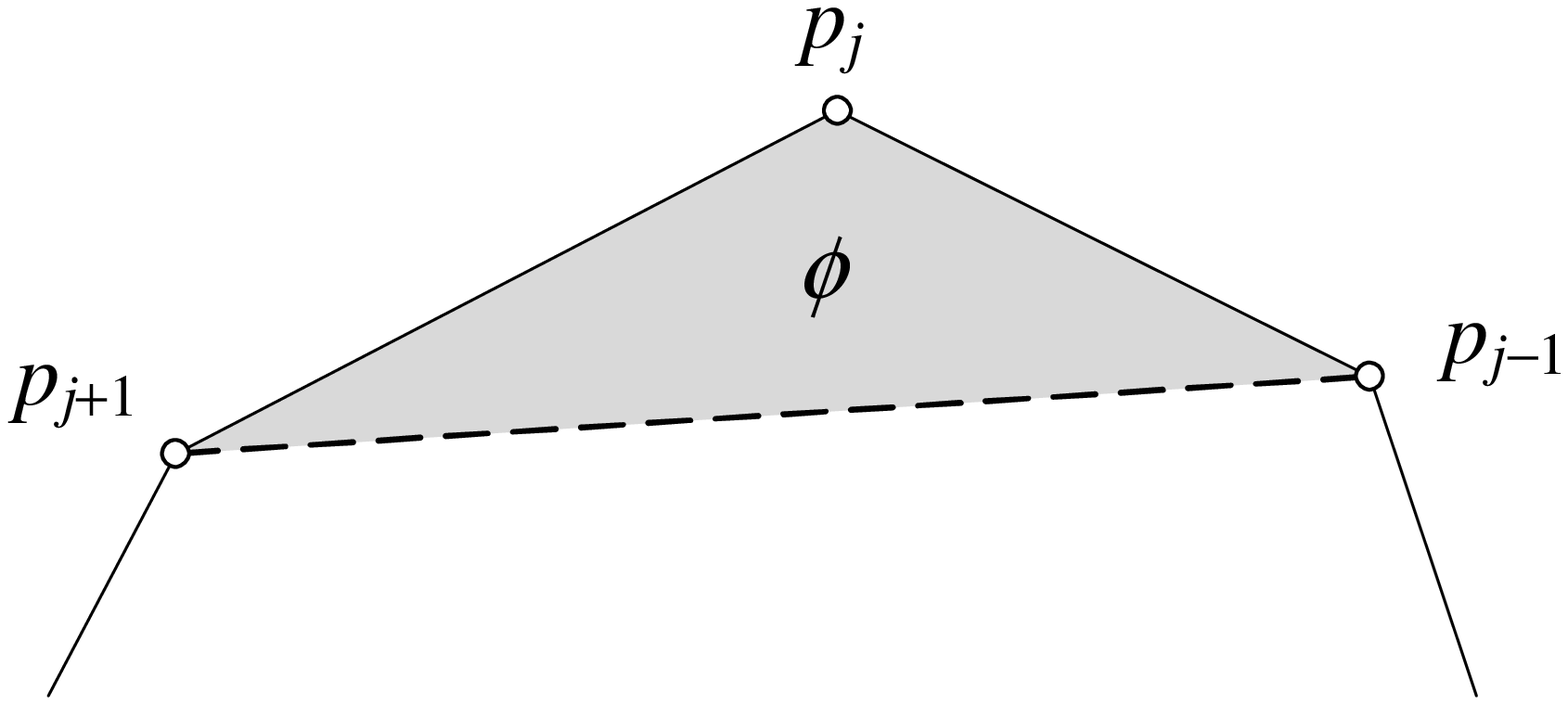}
  \caption{${\rm Int}(\triangle p_{j-1}p_jp_{j+1}) \cap P=\emptyset $.}
  \label{cd02}
  \end{center}
  \end{minipage}
   \begin{minipage}[b]{.48\linewidth}
 \, \, \, 
  \begin{center}\includegraphics[width=.8\linewidth]{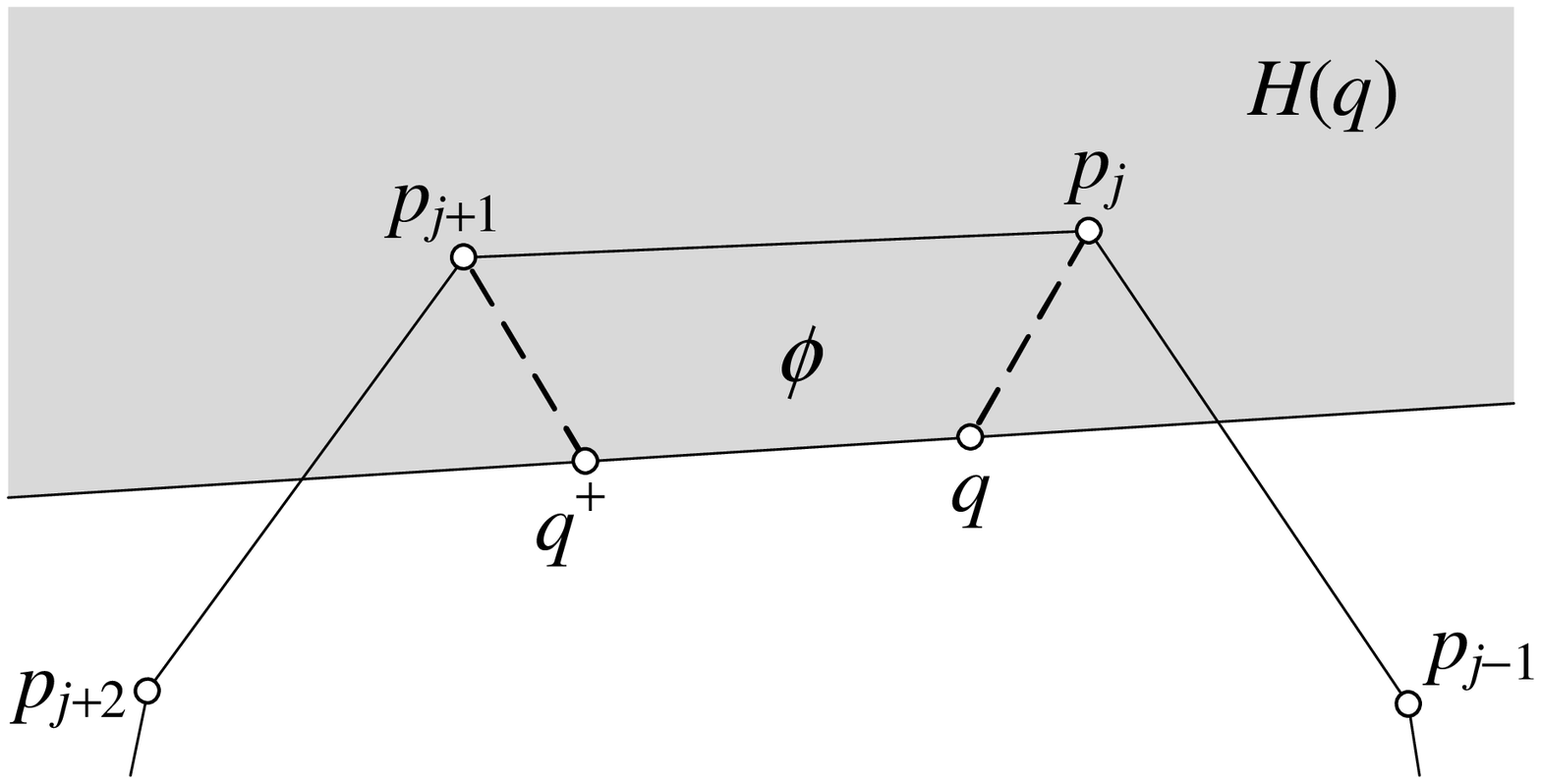}
  \caption{\bf $|H(q)\cap B(P)|=2$.}
  \label{cd03}
  \end{center}
  \end{minipage}
 \end{figure}
\end{center}

\medskip 
Now we may assume that 
\begin{equation}
\mbox{${\rm Int}(\triangle p_{j-1}p_jp_{j+1}) \cap P\ne \emptyset $ 
\quad for all $j$.}
\label{1}
\end{equation}

For $q\in B(I(P))$, let $q^+$ denote the element of $B(I(P))$ 
which occurs immediately after $q$ 
on the boundary of ${\rm CH}(I(P))$ 
in the counter-clockwise order, 
and denote by $H(q)$ the open half-plane 
which has $q$ and $q^+$ on its boundary and 
no point of $I(P)$ in its interior. 
It follows from (\ref{1}) that 
$|H(q)\cap B(P)|\le 2$ for all $q\in B(I(P))$.

\medskip
\noindent
{\bf Case 2.}~~$|H(q)\cap B(P)|=2$ for 
some $q\in B(I(P))$ (Figure~\ref{cd03}):

Take $q\in B(I(P))$ with $|H(q)\cap B(P)|=2$. 
For this $q$, let $p_j$ and $p_{j+1}$ be the two elements of $H(q) \cap B(P)$, 
and let $P'=P-\{p_j, p_{j+1}\}, \, m=|B(P')\cap I(P)|$ and 
$q_0=p_{j-1}$. 
Label the elements of $B(P')\cap I(P)$ as $q_1, q_2, \dots , q_m$ 
in such a way that $q_0, q_1, q_2, \ldots , q_m$ occur on 
the boundary of ${\rm CH}(P')$ in the counter-clockwise order, 
and let $q_{m+1}=p_{j+2}$. 
Furthermore, we let $l$ be the index such that $q_l=q$ (Figure~\ref{cd04}) and 
${\cal P}$ the quadrilateral $q_lp_jp_{j+1}q_{l+1}$, which is convex. 
Set 
$$
\Pi_0=\{{\cal P}\} \cup 
\bigl( \cup _{k=0}^{l-1} \{ \triangle q_kp_jq_{k+1} \}\bigr)
\cup \bigl( \cup _{k=l+1}^{m} \{ \triangle q_kp_{j+1}q_{k+1} \}\bigr).
$$
We have $|\Pi _0|=m+1$. 
%
\begin{center}
 \begin{figure}[htbp]
 \begin{minipage}[b]{.48\linewidth}
 \, \, \, 
  \begin{center}\includegraphics[scale=.3]{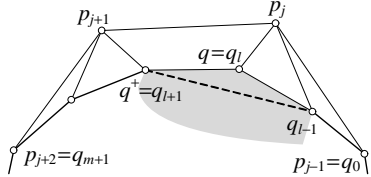}\\
  (a)~${\rm Int}(\triangle p_{j-1}p_jp_{j+1})\cap P = \emptyset$.
  \end{center}
  \end{minipage}
   \begin{minipage}[b]{.48\linewidth}
 \, \, \, 
  \begin{center}\includegraphics[scale=.3]{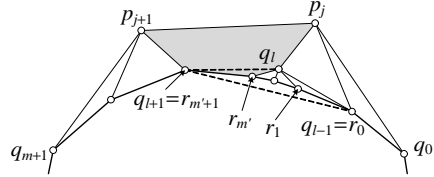}\\
  (b)~${\rm Int}(\triangle p_{j-1}p_jp_{j+1})\cap P \ne \emptyset$.
  \end{center}
  \end{minipage}
  \caption{Two subcases of Case 2.}
  \label{cd04}
  \end{figure}
\end{center}

First assume that 
${\rm Int}(\triangle q_{l-1}q_lq_{l+1}) \cap P=\emptyset$ (Figure~\ref{cd04}(a)). 
Let $\Pi _1$ be a convex decomposition of $P'$ with $u(P')$ elements. 
Then by the similar argument as Case 1, we have 
\begin{eqnarray*}
u(P') & \le & u(P'-\{q_l\})\\
 & \le & \frac{4}{3}(i-m)+\frac{1}{3}[(b-2+(m-1)]+1
 \, \, = \, \, \frac{4}{3}i+\frac{1}{3}b-m.
\end{eqnarray*}
Hence $\Pi _1 \cup \Pi _0$ is a convex decomposition of $P$ 
with $u(P')+(m+1)\le \frac{4}{3}i+\frac{1}{3}b+1$ elements. 

Next assume that 
${\rm Int}(\triangle q_{l-1}q_lq_{l+1}) \cap P \ne \emptyset$ (Figure~\ref{cd04}(b)). 
Let $P''=P'-\{q_l\}, \, m'=|B(P'')\cap I(P')|$ and $r_0=q_{l-1}$. 
Label the elements of $B(P'')\cap I(P')$ as $r_1, r_2, \dots , r_{m'}$ 
in such a way that $r_0, r_1, r_2, \ldots , r_{m'}$ occur on 
the boundary of ${\rm CH}(P'')$ in the counter-clockwise order, 
and let $r_{m'+1}=q_{l+1}$. 
Denote by $\Pi _1$ a convex decomposition of $P''$ with $u(P'')$ elements. 
We have 
\begin{eqnarray*}
u(P'') & \le & \frac{4}{3}(i-m-m')+\frac{1}{3}[(b-2+(m-1)+m']+1\\
 & = & \frac{4}{3}i+\frac{1}{3}b-m-m'
\end{eqnarray*}
by inductive hypothesis. 
Thus $\Pi _1'=\Pi _1 \cup \bigl( \cup _{k=0}^{m'} \{ \triangle r_jq_lr_{j+1} \}\bigr)$ 
is a convex decomposition of $P'$ with 
$|\Pi _1'|=u(P'')+(m'+1)\le \frac{4}{3}i+\frac{1}{3}b-m+1$. 
Now consider the covex decomposition $\Pi _0 \cup \Pi _1'$ of $P$. 
Since at least one of the pairs 
$\triangle r_0p_jq_l\in \Pi_0$ and $\triangle r_0q_lr_1 \in \Pi _1'$; 
or 
${\cal P}\in \Pi_0$ and $\triangle r_{m'}q_lr_{m'+1} \in \Pi _1'$ 
can be combined to form a single convex polygon, 
\begin{eqnarray*}
u(P) & \le & |\Pi _1'|+|\Pi _0|-1 \\
& = & \left( \frac{4}{3}i+\frac{1}{3}b-m+1 \right)+(m+1)-1 
\, \, = \, \, \frac{4}{3}i+\frac{1}{3}b+1,
\end{eqnarray*}
as desired. This is the end of Case 2. 

\bigskip
Now we may assume that 
\begin{equation}
\mbox{$|H(q)\cap B(P)| = 1$ for all $q\in B(I(P))$.}
\label{condition2}
\end{equation}
For each $j$, let $Q_j=B(P-\{p_j\})\cap I(P)$. 

\bigskip
\noindent
{\bf Case 3.}~~$|Q_j|\ge 3$ for some $j$:

Take any $j$ with $|Q_j|\ge 3$. 
For this $j$, let $m=|Q_j|$ and $q_0=p_{j-1}$. 
We label the elements of $Q_j$ as 
$q_1, \ldots , q_m$ in such a way that 
$q_0, q_1, \ldots , q_m$ occur on the boundary of ${\rm CH}(P-\{p_j\})$ 
in the counter-clockwise order, and let $q_{m+1}=p_{j+1}$ (Figure~\ref{cd05}). 
Since $m\ge 3$, $\triangle q_0q_1q_2$ and $\triangle q_{m-1}q_mq_{m+1}$ 
have disjoint interiors. 
To prove (\ref{ineq}), 
we combine two techniques which we used in the proof for Case 2. 
Let $\Pi_0= \cup _{k=0}^{m}\{ \triangle q_kp_1q_{k+1} \}$ in this case. 

\bigskip
\noindent
{\bf Case 3.1.}~~${\rm Int}(\triangle q_0q_1q_2) \cap P = 
{\rm Int}(\triangle q_{m-1}q_mq_{m+1}) \cap P = \emptyset$ 
(Figure~\ref{cd05}(a)):

Let $\Pi _1$ be a convex decomposition of $P-\{p_j, q_1, q_m \}$ with 
$u(P-\{p_j, q_1, q_m \})$ elements. 
Since each of the triangles $q_0q_1q_2$ and $q_{m-1}q_mq_{m+1}$ can be combined with 
an element of $\Pi _1$ to form a single convex polygon, 
\begin{eqnarray*}
u(P-\{p_j\}) & \le & u(P-\{p_j, q_1, q_m\}) \\
& \le & \frac{4}{3}(i-m)+\frac{1}{3}(b-1+m-2)+1 \, \, 
 = \, \, \frac{4}{3}i+\frac{1}{3}b-m. 
\end{eqnarray*}
Hence $u(P)\le u(P-\{p_j\})+|\Pi _0| = \frac{4}{3}i+\frac{1}{3}b+1$. 
%
%
\begin{center}
 \begin{figure}[htbp]
 \begin{minipage}[b]{.46\linewidth}
  \begin{center}\includegraphics[scale=.34]{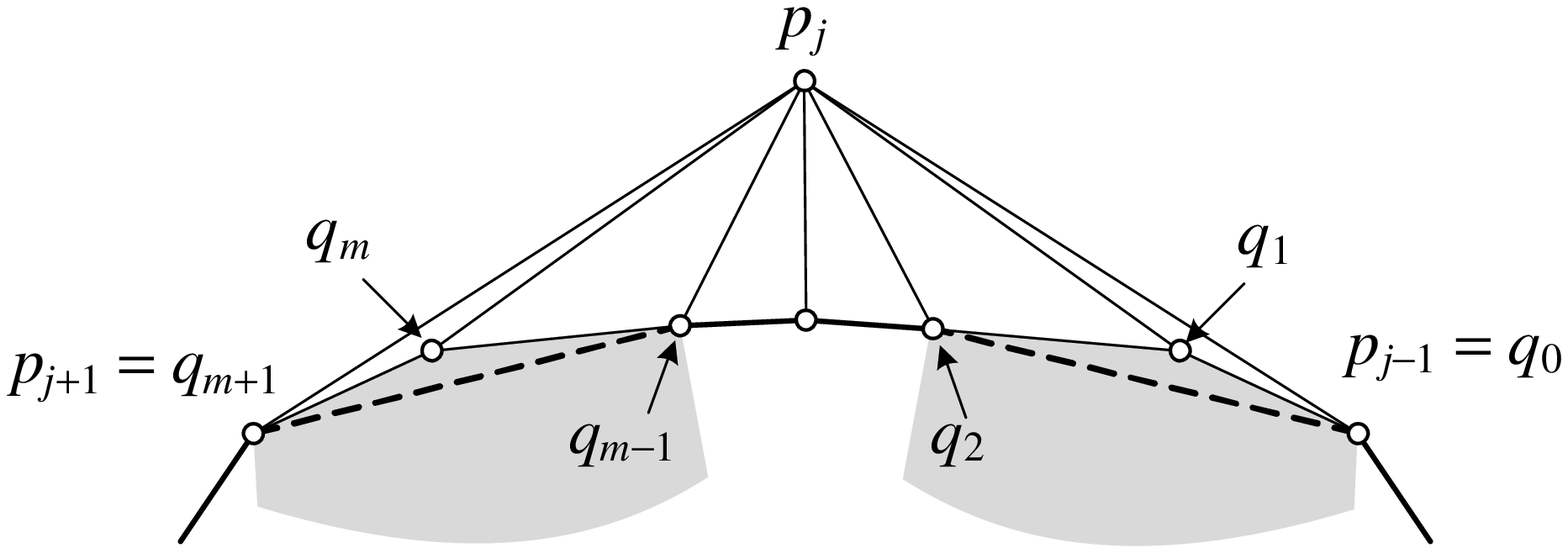}\\
  \hspace*{6.5mm}(a)
  \end{center}
  \end{minipage}
\hspace*{3mm}
  \begin{minipage}[b]{.46\linewidth}
 \begin{center}\includegraphics[scale=.34]{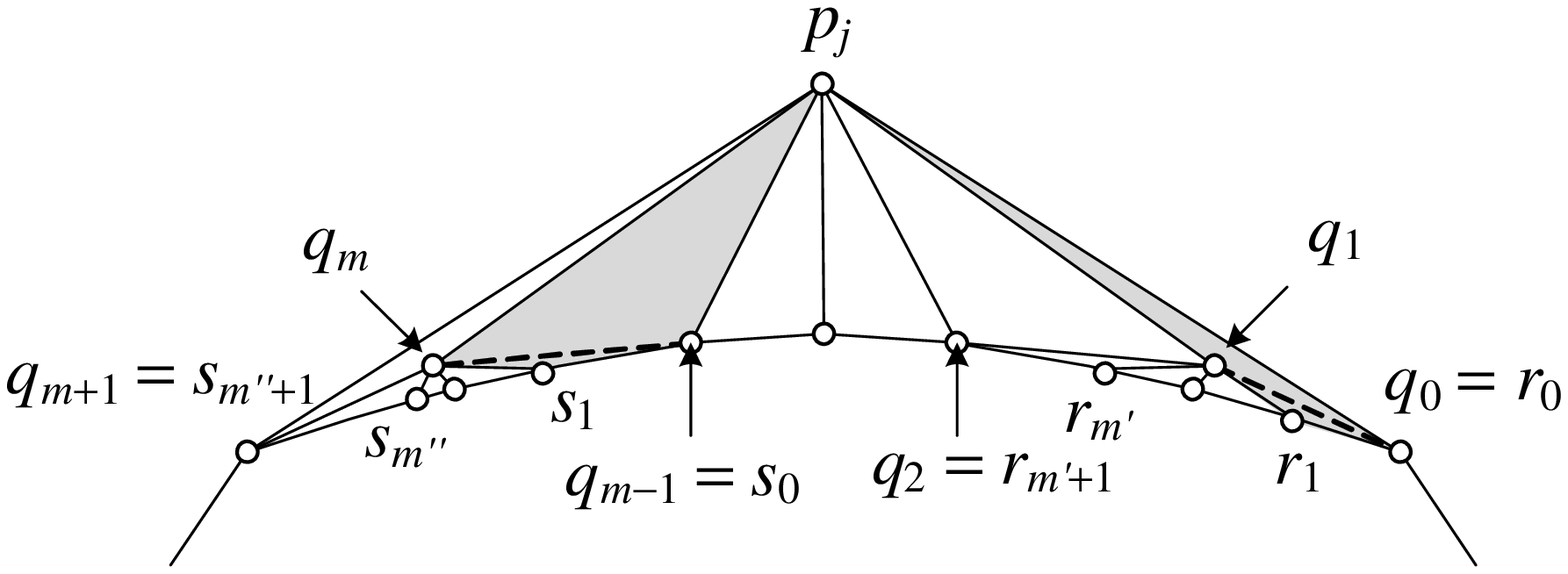}\\
  \hspace*{6.5mm}(b)
  \end{center}
  \end{minipage}
\caption{Point sets of (a)~Case 3.1 and (b)~Case 3.2.}
  \label{cd05}
  \end{figure}
\end{center}
%

\bigskip
\noindent
{\bf Case 3.2.}~~${\rm Int}(\triangle q_0q_1q_2) \cap P \ne \emptyset$ and 
${\rm Int}(\triangle q_{m-1}q_mq_{m+1}) \cap P \ne \emptyset$ 
(Figure~\ref{cd05}(b)):

Let $m'=|B(P-\{p_j,q_1\})\cap I(P-\{p_j\})|, \, r_0=q_0(=p_{j-1})$, 
label the elements of $B(P-\{p_j,q_1\})\cap I(P-\{p_j\})$ as 
$r_1, \ldots , r_{m'}$ in such a way that 
$r_0, r_1, \ldots , r_{m'}$ occur on the boundary of ${\rm CH}(P-\{p_j,q_1\})$ 
in the counter-clockwise order, and let $r_{m'+1}=q_2$ (Figure~\ref{cd05}(b)). 
Similarly, write $m''=|B(P-\{p_j,q_m\})\cap I(P-\{p_j\})|$, 
label the elements of $B(P-\{p_j,q_m\})\cap I(P-\{p_j\})$ as 
$s_1, \ldots , s_{m''}$, and let $s_0=q_{m-1}, \, s_{m''+1}=q_{m+1}$. 
Let $\Pi_1$ be a convex decomposition of $P-\{p_j,q_1,q_m\}$ 
with $u(P-\{p_j,q_1,q_m\})$ elements, 
$\Pi_2= \cup _{k=0}^{m'}\{ \triangle r_k q_1 r_{k+1} \}$, and 
$\Pi_3= \cup _{k=0}^{m''}\{ \triangle s_k q_m s_{k+1} \}$. 
Then $\Pi_1'=\Pi_1 \cup \Pi_2 \cup \Pi_3$ is a convex decompositon of 
$P-\{p_j\}$ and 
\begin{eqnarray*}
|\Pi_1'|
 & \le & \left[ \frac{4}{3}(i-m-m'-m'')+\frac{1}{3}(b-1+m-2+m'+m'')+1\right] \\
 &     & \hspace*{7cm}+ (m'+1)+(m''+1) \\
 & = & \frac{4}{3}i+\frac{1}{3}b-m+2.
\end{eqnarray*}
Now consider the convex decomposition $\Pi _1' \cup \Pi_0$ of $P$. 
Observe that at least one of the pairs $\triangle r_0p_jq_1$ and $\triangle r_0q_1r_1$; 
or $\triangle q_1p_jr_{m'+1}$ and $\triangle r_{m'}q_1r_{m'+1}$ 
can be combined to form a convex quadrilateral, 
and also 
at least one of the pairs $\triangle s_0p_jq_m$ and $\triangle s_0q_ms_1$; 
or $\triangle q_mp_js_{m''+1}$ and $\triangle s_{m''}q_ms_{m''+1}$ 
can be combined to form a convex quadrilateral. 
Thus it now follows that 
$$
u(P)\le |\Pi_1'|+|\Pi _0|-2 \le 
\frac{4}{3}i+\frac{1}{3}b+1.
$$

\bigskip
\noindent
{\bf Case 3.3.}~~Exactly one of ${\rm Int}(\triangle q_0q_1q_2) \cap P$ or 
${\rm Int}(\triangle q_{m-1}q_mq_{m+1}) \cap P$ is empty:

By symmetry, we may assume that 
${\rm Int}(\triangle q_0q_1q_2) \cap P \ne \emptyset$ 
and ${\rm Int}(\triangle q_{m-1}$
$q_mq_{m+1}) \cap P= \emptyset$ (Figure~\ref{cd05b}). 
\begin{center}
\begin{figure}[htbp]
\begin{center}
\includegraphics[scale=0.4]{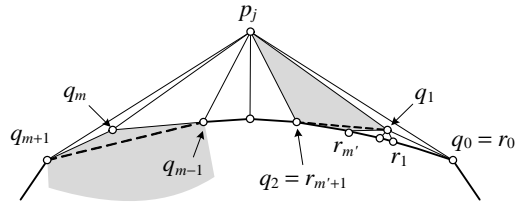}
\caption{Point sets of Case 3.3.}
\label{cd05b}
\end{center}
\end{figure}
\end{center}

In this case, we combine the arguments used in the proofs for 
Cases 3.1 and 3.2. 
As in Case 3.2, 
define $m'=|B(P-\{p_j,q_1\})\cap I(P-\{p_j\})|$, 
labels $r_1, \ldots , r_{m'} \in B(P-\{p_j,q_1\})\cap I(P-\{p_j\}), \,
r_0=q_0$ and $r_{m'+1}=q_2$. 
Now let $\Pi_1$ be a convex decomposition of $P-\{p_j,q_1,q_m\}$ with 
$u(P-\{p_j,q_1,q_m\})$ elements, and 
$\Pi_2=\cup _{k=0}^{m'}\{ \triangle r_k q_1 r_{k+1} \}$. 
By combining $\triangle q_{m-1}q_mq_{m+1}$ with an element of 
$\Pi_1$ to form a convex polygon, 
we obtain a convex decomposition $\Pi_1'$ of $P-\{p_j,q_1\}$ with 
$|\Pi _1'|=|\Pi _1|$. 
Then $\Pi_1''=\Pi _1' \cup \Pi _2$ is 
a convex decomposition of $u(P-\{p_j\})$, 
and 
\begin{eqnarray*}
|\Pi_1''| & = & |\Pi_1'|+|\Pi_2| \, \, = \, \, |\Pi_1|+|\Pi_2| \\
& \le & \left[ \frac{4}{3}(i-m-m')+\frac{1}{3}(b-1+m-2+m')+1 \right]
+(m'+1)\\
& = & \frac{4}{3}i+\frac{1}{3}b-m+1.
\end{eqnarray*}
Since at least one of the pairs $\triangle r_0p_jq_1$ and 
$\triangle r_0q_1r_1$; 
or $\triangle q_1p_jr_{m'+1}$ and $\triangle r_{m'}q_1r_{m'+1}$ 
can be combined to form a convex quadrilateral, 
$$
u(P)\le |\Pi_1''|+|\Pi _0|-1 \le 
\frac{4}{3}i+\frac{1}{3}b+1.
$$

\bigskip
\noindent
{\bf Case 4.}~~$|Q_j|\le 2$ for all $j$: 

In the rest of this paper, 
we denote by $q_{j}$ (resp.\,$q_{j}'$) the element of $Q_j$ 
which occurs immediately after $p_{j-1}$ (resp.\,before $p_{j+1}$) 
on the boundary of ${\rm CH}(P-\{p_j\})$, 
in the counter-clockwise order 
(Figure~\ref{cd09}; we have $q_j=q_j'$ when $|Q_j|=1$). 
\begin{center}
\begin{figure}[htbp]
\begin{center}
\includegraphics[scale=0.27]{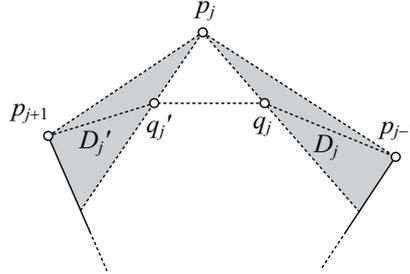}
\caption{$q_{j}, \, q_{j}'$ and $D_j, \, D_j'$.}
\label{cd09}
\end{center}
\end{figure}
\end{center}
For three points $a, b$ and $c$ 
in general position on the plane, let $A(a; b, c)$ denote the 
angular domain bounded by half-lines $ab$ and $ac$, and containing 
the segment $bc$. 
We let $D_j={\rm CH}(P) \cap {\rm Int}(A(p_j; q_j, p_{j-1}))$ 
and $D_j'={\rm CH}(P) \cap {\rm Int}(A(p_j; p_{j+1}, q_j'))$. 
\begin{lemma}
\mbox{$D_j \cap P = D_j' \cap P=\emptyset $ 
for all $j$.}
\label{dj}
\end{lemma}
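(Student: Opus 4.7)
My plan is to prove $D_j \cap P = \emptyset$; the argument for $D_j'$ is symmetric. Decompose $D_j$ along the chord $p_{j-1}q_j$ into the triangle $T = \triangle p_{j-1}p_jq_j$ and the region $D_j \setminus T$, which lies on the opposite side of line $p_{j-1}q_j$ from $p_j$ but still inside $\mathrm{CH}(P)$.

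The triangle part is immediate: since $q_j$ is CCW-adjacent to $p_{j-1}$ on $\partial\mathrm{CH}(P \setminus \{p_j\})$, the segment $p_{j-1}q_j$ is an edge of $\mathrm{CH}(P \setminus \{p_j\})$, so the line through $p_{j-1}$ and $q_j$ has $p_j$ alone on one side and every other point of $P$ on the opposite side. Consequently $\mathrm{Int}(T) \cap P = \emptyset$.

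For $D_j \setminus T$ I argue by contradiction: suppose $p \in P$ lies there. Then $p \in \mathrm{CH}(P \setminus \{p_j\}) \setminus \{p_j, p_{j-1}, q_j\}$ and $p$ sits strictly inside the wedge $A(p_j; q_j, p_{j-1})$. If $p \in B(P)$, say $p = p_k$ with $k \ne j, j-1$, I plan to leverage the Case~4 hypothesis (\ref{condition2}): $|H(q) \cap B(P)| = 1$ for every $q \in B(I(P))$, combined with $i \ge 3$, yields a cyclic-order-preserving bijection between $B(P)$ and the edges of $\mathrm{CH}(I(P))$ (each hull vertex of $P$ paired with the unique edge whose outer half-plane contains it). Under this bijection $p_{j-1}$ and $p_j$ are paired with the two edges of $\mathrm{CH}(I(P))$ incident to $q_j$, so any other $p_k$ is paired with a non-incident edge; the resulting geometric position of $p_k$ forces its angular direction from $p_j$ to fall outside the arc $(\alpha_{q_j}, \alpha_{p_{j-1}})$, contradicting $p_k \in A(p_j; q_j, p_{j-1})$. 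If instead $p \in I(P)$, I rule out $p \in Q_j$: in Case~4 $|Q_j|\le 2$ and $p \ne q_j$, so we would need $p = q_j'$, but $q_j'$ is adjacent to $p_{j+1}$ on the new hull and hence lies angularly on the $p_{j+1}$-side of $p_j$, disjoint from our wedge. Thus $p$ is strictly interior to $\mathrm{CH}(P \setminus \{p_j\})$, and I derive a contradiction from the minimality of $q_j$ as the first CCW-adjacent vertex of $p_{j-1}$ on $\partial\mathrm{CH}(P \setminus \{p_j\})$: the position of $p$ in the outer wedge combined with this minimality would force either $p$ itself onto $\partial\mathrm{CH}(P \setminus \{p_j\})$ (inflating $|Q_j|$) or $p$ to CCW-precede $q_j$ from $p_{j-1}$'s view.

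The main obstacle is the hull-vertex sub-case: extracting the cyclic-order-preserving bijection from (\ref{condition2}) and translating it into the concrete angular inequality at $p_j$ requires care, and will hinge on how $\mathrm{CH}(P)$ wraps around $\mathrm{CH}(I(P))$ under the pigeonhole constraint of Case~4.
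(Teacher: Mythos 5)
Your opening step (cut $D_j$ along the chord $p_{j-1}q_j$ and observe that $\triangle p_{j-1}p_jq_j$ is empty because $p_{j-1}q_j$ is an edge of ${\rm CH}(P\setminus\{p_j\})$) agrees with the paper, but both of your subcases for a point $p$ in $D_j\setminus\triangle p_{j-1}p_jq_j$ have genuine gaps, and you never make the extremal choice that drives the paper's argument, namely picking the $p$ that minimizes $\angle p q_j p_{j-1}$. In the interior subcase, after correctly excluding $p\in Q_j$ you claim that a point of $I(P)$ strictly interior to ${\rm CH}(P\setminus\{p_j\})$ and lying in the wedge already contradicts the minimality of $q_j$. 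It does not: precisely because $p_{j-1}q_j$ is a hull edge of $P\setminus\{p_j\}$, the region $D_j\setminus\triangle p_{j-1}p_jq_j$ lies entirely \emph{inside} ${\rm CH}(P\setminus\{p_j\})$, so an interior point sitting there neither lands on $\partial\,{\rm CH}(P\setminus\{p_j\})$ nor disturbs which vertex follows $p_{j-1}$; nothing about $Q_j$ is "inflated". The paper instead takes the angularly extremal $p$ and shows that $pq_j$ must then be an edge of ${\rm CH}(I(P))$ with $q_j=p^+$ and with both $p_{j-1}$ and $p_j$ in $H(p)$, contradicting (\ref{condition2}). Your proposal never invokes (\ref{condition2}) in this subcase, and without it (this is exactly the configuration of Case 2) the claim is simply false.

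In the hull-vertex subcase, the asserted "cyclic-order-preserving bijection between $B(P)$ and the edges of ${\rm CH}(I(P))$" does not follow from (\ref{condition2}) and $i\ge 3$. Condition (\ref{condition2}) says each edge of ${\rm CH}(I(P))$ has exactly one point of $B(P)$ beyond it; it does not say each point of $B(P)$ lies beyond exactly one edge, and indeed one only gets $|B(I(P))|\ge |B(P)|$, with no bijection when the inequality is strict. (The clean correspondence $q_j'=q_{j+1}$ is deduced in the paper \emph{from} Lemma~\ref{dj}, so you cannot assume it while proving the lemma.) The paper's route here is different and elementary: for the extremal $p$, if $p\in B(P)$ then $p=p_{j-2}$ and ${\rm Int}(\triangle p_{j-2}p_{j-1}p_j)\cap P=\emptyset$, contradicting assumption (\ref{1}) — a hypothesis your proposal never uses anywhere. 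Both halves therefore need to be redone along these lines.
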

\medskip
\noindent
{\em Proof.}~We prove only $D_j \cap P=\emptyset $ 
since $D_j' \cap P=\emptyset $ can be proved quite similarly. 
Suppose there exists $p\in D_j \cap P$. 
Then $p\in D_j\setminus \triangle p_{j-1}p_jq_j$ by the definition of $q_j$. 
Among such $p$'s, take the one such that $\angle pq_jp_{j-1}$ is the smallest. 
Then if $p\in B(P)$ (i.e. $p=p_{j-2}$), then we have 
${\rm Int}(\triangle p_{j-2}p_{j-1}p_{j}) \cap P = \emptyset $, which 
contradics (\ref{1}); 
and if $p\in I(P)$, then we have $q_{j}=p^+$ and 
$H(p)\cap B(P) \supset \{p_{j-1}, p_{j}\}$, 
which contradics (\ref{condition2}).
\qed

\medskip
Since $i\ge 3$ (recall (\ref{0})), Lemma~\ref{dj} in particular 
implies that $q_j\ne q_j'$ for all $j$, 
and hence $|Q_j|=2$ for all $j$. 
It also follows from Lemma~\ref{dj} that $q_{j}'=q_{j+1}$ for all $j$. 

\medskip
\noindent
{\bf Case 4.1.}~~
${\rm Int}(\triangle q_{j}q_{j+1}q_{j+2}) \cap P=\emptyset$ 
for some $j$: 

For this $j$, assume further that 
\begin{equation}
{\rm Int}(\triangle p_{j-1}q_{j}q_{j+2}) \cap P \ne \emptyset 
\quad
\mbox{(Figure~\ref{cd10}(a); we have $b\ge 4$ in this case)}.
\label{further1}
\end{equation}
\begin{center}
 \begin{figure}[htbp]
 \begin{minipage}[b]{.46\linewidth}
 \, \, \, 
  \begin{center}\includegraphics[scale=.27]{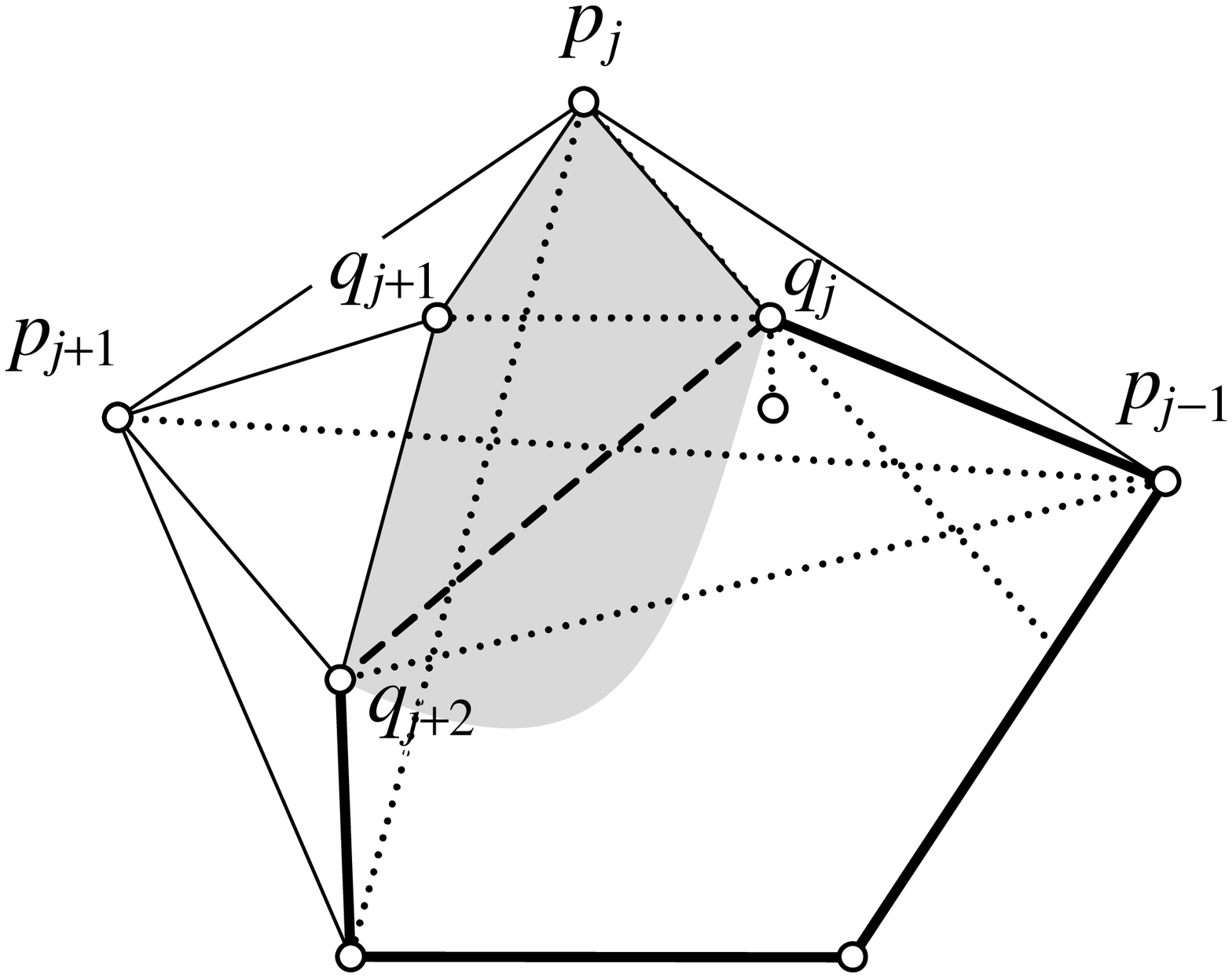}\\
  (a)~${\rm Int}(\triangle p_{j-1}q_{j}q_{j+2}) \cap P \ne \emptyset$
  \end{center}
  \end{minipage}
   \begin{minipage}[b]{.46\linewidth}
 \, \, \, \, \, \, \, \, \, 
  \begin{center}\includegraphics[scale=.27]{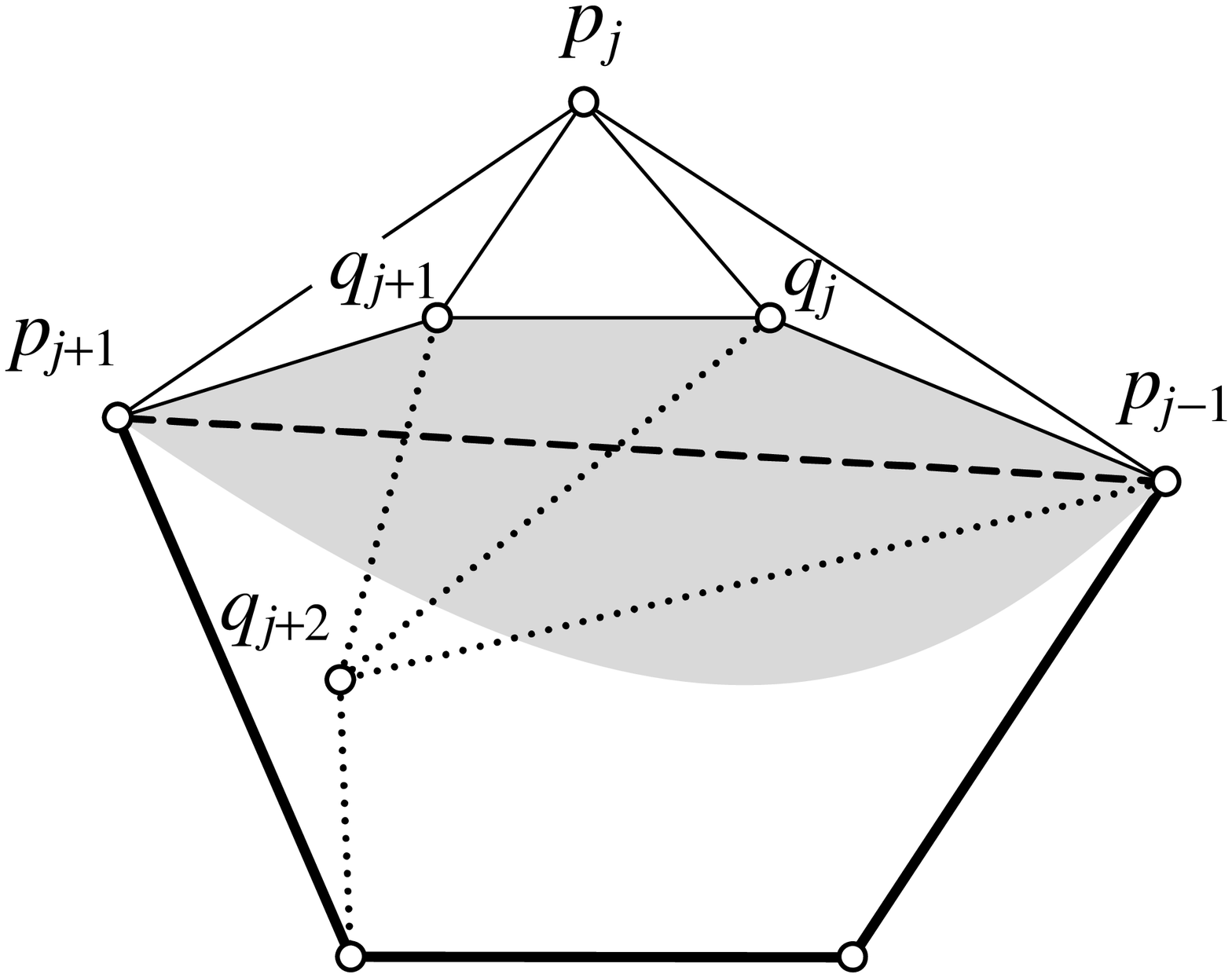}\\
  (b)~${\rm Int}(\triangle p_{j-1}q_{j}q_{j+2}) \cap P = \emptyset$
  \end{center}
  \end{minipage}
  \caption{Two subcases of Case 4.1.}
  \label{cd10}
  \end{figure}
\end{center}
%
Take a convex decomposition of $P-\{p_j, p_{j+1}, q_{j+1}\}$ 
with $u(P-\{p_j, p_{j+1}, q_{j+1}\})$ elements, and let 
${\cal P}$ be its element that conntains the segment $q_jq_{j+2}$ 
on its boundary. 
\begin{lemma}
All vertices, except $q_j$, of ${\cal P}$ lie on 
the same side as $q_{j+2}$ with respect to the straight line $p_jq_j$. 
\label{cl1}
\end{lemma}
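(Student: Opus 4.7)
My plan is to localize the points of $P$ that could serve as a vertex of ${\cal P}$ on the ``wrong'' ($p_{j-1}$) side of the line $p_jq_j$ by means of Lemma~\ref{dj}, and then to rule out the single remaining candidate via hypothesis (\ref{further1}). For the localization, since $p_{j-1}p_j$ is a hull edge of ${\rm CH}(P)$, every point of ${\rm CH}(P)$ on the $p_{j-1}$-side of $p_jq_j$ lies in the closed wedge $A(p_j;q_j,p_{j-1})$. Lemma~\ref{dj} gives $D_j\cap P=\emptyset$, so any such point of $P$ must lie on the wedge's boundary; by general position those boundary points of $P$ are only $p_j,p_{j-1},q_j$. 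Thus the only point of $P$ strictly on the $p_{j-1}$-side of $p_jq_j$ is $p_{j-1}$, and the only point of $P-\{p_j,p_{j+1},q_{j+1}\}$ on $p_jq_j$ itself is $q_j$. So the only candidate vertex of ${\cal P}$ (other than $q_j$) that could violate the conclusion is $p_{j-1}$.

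Assume for contradiction that $p_{j-1}$ is a vertex of ${\cal P}$. Since ${\cal P}$ is convex and contains the three vertices $p_{j-1},q_j,q_{j+2}$, we have $\triangle p_{j-1}q_jq_{j+2}\subseteq {\cal P}$ and hence ${\rm Int}(\triangle p_{j-1}q_jq_{j+2})\subseteq {\rm Int}({\cal P})$. Because ${\cal P}$ is an element of a convex decomposition of $P-\{p_j,p_{j+1},q_{j+1}\}$, its interior is disjoint from that set. Yet (\ref{further1}) provides some $p\in P\cap {\rm Int}(\triangle p_{j-1}q_jq_{j+2})\subseteq {\rm Int}({\cal P})$, forcing $p\in\{p_j,p_{j+1},q_{j+1}\}$.

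Finally, I would derive the desired contradiction by showing $\{p_j,p_{j+1},q_{j+1}\}\cap {\rm CH}(P-\{p_j,p_{j+1},q_{j+1}\})=\emptyset$, which is incompatible with $p\in\triangle p_{j-1}q_jq_{j+2}\subseteq {\rm CH}(P-\{p_j,p_{j+1},q_{j+1}\})$. The points $p_j,p_{j+1}$ are vertices of ${\rm CH}(P)$ and $q_{j+1}$ is a vertex of ${\rm CH}(P-\{p_j\})$, hence also of ${\rm CH}(P-\{p_j,p_{j+1}\})$ (extremality passes to subsets). The standard supporting-line argument at each of these three vertices, together with general position, places it strictly outside ${\rm CH}(P-\{p_j,p_{j+1},q_{j+1}\})$. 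The step I expect to be most delicate is the localization: it relies on $p_{j-1}p_j$ being a hull edge to identify the forbidden $p_{j-1}$-side with the wedge controlled by Lemma~\ref{dj}, and the general-position bookkeeping (distinguishing the line, the two bounding rays, and the strict interior of the wedge) must be handled carefully.
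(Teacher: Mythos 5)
Your proof is correct and follows essentially the same route as the paper's: use Lemma~\ref{dj} to show $p_{j-1}$ is the only point of $P$ strictly on the wrong side of the line $p_jq_j$, then exclude $p_{j-1}$ as a vertex of ${\cal P}$ because it would force $\triangle p_{j-1}q_jq_{j+2}\subseteq{\cal P}$, whose interior contains a point of $P$ by (\ref{further1}). Your write-up merely makes explicit two steps the paper leaves implicit (the wedge/boundary bookkeeping in the localization, and the verification that the offending interior point cannot be one of the deleted points $p_j,p_{j+1},q_{j+1}$), both of which you handle correctly.
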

\begin{proof}
By Lemma~\ref{dj}, $p_{j-1}$ is the only element of $P$ 
which lies on the opposite side of $q_{j+2}$ 
with respect to the line $p_jq_j$. 
But we have $p_{j-1}\not\in {\cal P}$ 
since otherwise ${\cal P}\supseteq \triangle p_{j-1}q_{j}q_{j+2}$, 
and hence ${\rm Int}({\cal P})\cap (P-\{p_j, p_{j+1}, q_{j+1}\}) = 
{\rm Int}({\cal P})\cap P \ne \emptyset$ by 
(\ref{further1}), which contradicts the choice of ${\cal P}$. 
\end{proof}

It now follows from Lemma~\ref{cl1} that ${\cal P}$ can be combined with 
the quadrilateral $p_jq_{j+1}q_{j+2}q_{j}$ to form a single convex polygon. 
Thus 
$$
u(P)  \le  u(P-\{p_j, p_{j+1}, q_{j+1}\}) + 4
\le  \left[ \frac{4}{3}(i-3)+\frac{1}{3}b+1 \right] +4 
=  \frac{4}{3}i+\frac{1}{3}b+1. 
$$

Next assume that 
\begin{equation}
{\rm Int}(\triangle p_{j-1}q_{j}q_{j+2}) \cap P = \emptyset
\quad \mbox{(Figure~\ref{cd10}(b))}.
\label{further2}
\end{equation}

First consider the case where $b\ge 4$. 
In this case, we can verify that quadrilateral $p_{j-1}q_{j}q_{j+1}p_{j+1}$ 
contains no element of $P$ in its interior. 
Take a convex decomposition of $P-\{p_j, q_{j}, q_{j+1}\}$ 
with $u(P-\{p_j, q_{j}, q_{j+1}\})$ elements, and let 
${\cal P}$ be its element that conntains the segment $p_{j-1}p_{j+1}$ 
on its boundary. 
Since the convex quadrilateral $p_{j-1}q_{j}q_{j+1}p_{j+1}$ can be 
combined with ${\cal P}$ to form a single convex polygon, 
$$
u(P)  \le  u(P-\{p_j, q_{j}, q_{j+1}\}) + 3
      \le  \left[ \frac{4}{3}(i-2)+\frac{1}{3}(b-1)+1 \right] +3
      =  \frac{4}{3}i+\frac{1}{3}b+1. 
$$

Next consider the case where $b=3$. 
We have $P=\{p_0, p_1, p_2, q_0, q_1, q_2\}$ in this case, 
and we can easily verify that 
$u(P)=6$, which is equal to 
$\frac{4}{3} \! \cdot \! 3+\frac{1}{3} \! \cdot \! 3+1=
\frac{4}{3}i+\frac{1}{3}b+1$, as desired. 

\bigskip
\noindent
{\bf Case 4.2.}~~
${\rm Int}(\triangle q_{j}q_{j+1}q_{j+2}) \cap P \ne \emptyset$ 
for all $j$ (Figure~\ref{cd10c}): 

Take a convex decomposition of $I(P)$ 
with $u(I(P))$ elements, and 
for each $j=0, 1, \dots , b-1$, let ${\cal P}_j$ be its element 
that contains the segment $q_{j}q_{j+1}$ 
on its boundary. 
%
\begin{center}
\begin{figure}[htbp]
\begin{center}
\includegraphics[scale=0.27]{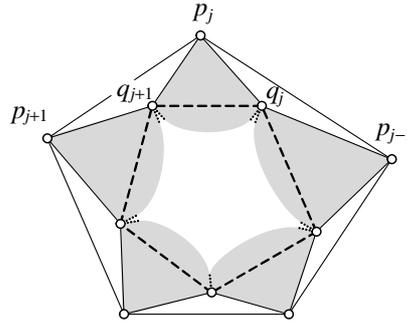}
\caption{A point set of Case 4.2.}
\label{cd10c}
\end{center}
\end{figure}
\end{center}

By the assumption of this case, ${\cal P}_j \ne {\cal P}_{j+1}$ for all $j$. 
Furthermore, it follows from Lemma~\ref{dj} that 
$\triangle p_jq_{j+1}q_j$ and ${\cal P}_j$ can be combined 
to form a single convex polygon. 
It is possible that 
${\cal P}_{j_1}={\cal P}_{j_2}= \cdots ={\cal P}_{j_k}$ for 
some non-consecutive indices $j_1, \, \dots , \, j_k$. 
In this case, we can combine 
triangles $p_{j_1}q_{j_1+1}q_{j_1}, \, \dots , \, p_{j_k}q_{j_k+1}q_{j_k}$ 
and ${\cal P}_{j_1}(=\dots ={\cal P}_{j_k})$ 
to form a single convex polygon. 
As a consequent of it, we obtain
$$
u(P)  \le  u(I(P)) + b 
      \le  \left[ \frac{4}{3}(i-b)+\frac{1}{3}b+1 \right] +b 
      <  \frac{4}{3}i+\frac{1}{3}b+1, 
$$
which completes the proof of Theorem 1. 
\qed


\begin{thebibliography}{5}
\itemsep=2pt
%
\bibitem{4}
O. Aichholzer and H. Krasser, 
The point set order type data base: A collection of applications and results, 
{\sl Proc. 13th Canadian Conference on Computational Geometry}, 
University of Waterloo, Waterloo, 2001, 17--20.
%
\bibitem{5}
J. Garc\'{\i}a-L\'opez, C.M. Nicol\'as, 
Planar point sets with large minimum convex partitions, 
{Proc. 22nd European Workshop on Computational Geometry}, 
Delphi, 2006, 51--54.
%
\bibitem{3}
K. Hosono, 
On convex decompositions of a planar point set, 
{\sl Discrete Mathematics} {\bf 309}(6) (2009), 1714--1717.
%
\bibitem{2}
V. Neumann-Lara, E. Rivera-Campo, J. Urrutia, A note on convex 
decompositions 
of point sets in the plane, {\sl Graphs and Combinatorics} {\bf 20}(2) (2004), 
223--231�D
%
\bibitem{1}
J. Urrutia, Open problem session, {\sl 10th Canadian Conference 
on Computational Geometry}, McGill University, Montreal, 1998.
\end{thebibliography}
\end{document}